\newtheorem{thm}{Theorem}[section]
\newtheorem{theorem}[thm]{Theorem}
\newtheorem{lemma}[thm]{Lemma}
\newtheorem{proposition}[thm]{Proposition}
\theoremstyle{definition}
\newtheorem{definition}[thm]{Definition}
\newtheorem{remark}[thm]{Remark}
\newtheorem{observation}[thm]{Observation}
\newtheorem{definition-proposition}[thm]{Definition-Proposition}
\newcommand{\C} {\mathbf{C}}
\newcommand{\bP}{\mathbb{P}}
\newcommand{\cB}{\mathcal{B}}
\newcommand{\cI}{\mathcal {I}}
\newcommand{\cO}{\mathcal{O}}
\newcommand{\al}{\alpha}
\newcommand{\be}{\beta}
\newcommand{\ga}{\gamma}
\newcommand{\ep}{\epsilon}
\newcommand{\fm}{\mathfrak m}
\newcommand{\salg} {\mathrm{ (salg) }}
\newcommand{\sets} {\mathrm{ (sets) }}
\newcommand\sschemes{ \mathrm{(sschemes)} }
\newcommand{\Gr}{\mathrm{Gr}}
\newcommand{\GL}{\mathrm{GL}}
\newcommand{\rGL}{\mathrm{GL}}
\newcommand{\rEnd}{{\mathrm{End}}}
\newcommand{\rk}{{\mathrm{rk}}}
\newcommand{\pr}{\mathrm{pr}}
\newcommand{\uspec}{\mathrm{\underline{Spec}}}
\newcommand\Spec{{\mathrm{Spec}}}
\newcommand{\Stab}{{\mathrm{Stab}}}
\newcommand{\fbar}{\overline{f}}
\newcommand{\varphibar}{\overline{\varphi}}
\newcommand{\lra} {\longrightarrow}
\newcommand{\beq} {\begin{equation}}
\newcommand{\eeq} {\end{equation}}
\begin{document}

\bigskip

\Large
\centerline{\bf Quotients of complex algebraic supergroups}
\normalsize
\bigskip

\centerline{R. Fioresi$^\flat$, S. D. Kwok$^\star$, D. W. Taylor$^\natural$}

\medskip
\centerline{\it $^\flat$ Dipartimento di Matematica, Universit\`{a} di
Bologna }
 \centerline{\it Piazza di Porta S. Donato, 5. 40126 Bologna. Italy.}
\centerline{{\footnotesize e-mail: rita.fioresi@UniBo.it}}

\medskip
\centerline{\it $^\star$ Department of Biostatistics}
\centerline{\it UCLA, Los Angeles, CA 90095-1772, USA}
\centerline{{\footnotesize e-mail: sdkwok2@gmail.com}}

\medskip
\centerline{\it $^\star$ Department of Mathematics}
\centerline{\it UCLA, 
  CA 90095-1555, USA}
\centerline{{\footnotesize e-mail: dwtaylor@math.ucla.edu}}

\bigskip
\centerline{\textit{\large To the memory of V. S. Varadarajan}}
\bigskip

\begin{abstract}
In this paper we prove that the etale sheafification
of the functor arising from the quotient
of an algebraic supergroup by a closed subsupergroup is representable
by a smooth superscheme.
\end{abstract}

\section{Introduction}
\label{intro-sec}

The purpose of this paper is to provide a construction of
the quotient of a complex algebraic supergroup by a
closed subsupergroup.
This construction is already available in a more
general setting in the literature (see
\cite{mz1}), however here we present a different and more geometric
proof, that is closer to the original approach by Chevalley (see \cite{bo} Ch. II).

\medskip
We start by reviewing the ordinary construction. Suppose $G$ is a complex
algebraic group and $H$ a closed subgroup. 
Then, $G/H$ admits
a unique algebraic variety structure, compatible
with the group multiplication. 
In fact, there exists a rational representation of $G$ in a finite dimensional
vector space $V$ and a line $L$ in $V$ whose stabilizer is $H$.
Hence, we have an action of $G$ on the projective space $\bP(V)$
and $H$ is the stabilizer subgroup of the point $[L]$ in $\bP(V)$.
We can thus identify set-theoretically the quotient $G/H$ with the orbit
$Y$ of the point $[L]$; $Y$ being an orbit is also an algebraic variety,
because of Chevalley's theorem.
The uniqueness of this structure is obtained by the universal property
of the quotient (see \cite{bo} Ch. II).

\medskip
We want to replicate this geometric construction in the super setting. 
There are two major obstructions: the $\C$-points of a supervariety
do not carry enough information on its geometry, 
as it happens for the ordinary counterpart.
Also, quotients of supergroups may not admit a projective embedding.
We overcome the first difficulty by making use of the functor of points
of superschemes and introducing etale coverings
and etale sections, which mimic in some sense
the differential approach to the construction of quotients (see 
\cite{flv, bcf}).
As for the latter problem, we replace the projective superspace
with Grassmannian superschemes. In supergeometry the projective superspace
appears somehow too rigid and it is necessary to allow for more
general structures, as the Grassmannians. In this way 
we can realize an embedding of an orbit of a supergroup
action into a suitable Grassmannian, hence identifying it with a
smooth superscheme. In this sense,
our proof will also provide a variation of the ordinary construction of
quotients of complex algebraic groups 
and goes beyond a mere translation of the known recipe into the
super context.

\medskip
Our main result is the following.

\begin{theorem} \label{mainthm}
Let $G$ be a complex algebraic supergroup, $H$ 
a closed subsupergroup.
Then, the sheafification in the etale topology 
of the functor $T \lra G(T)/H(T)$,
$T$ a superscheme, is representable
in the category of superschemes, by a smooth superscheme.
\end{theorem}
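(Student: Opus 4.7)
The plan is to implement in the super category the geometric strategy of the introduction: embed $G/H$ as an orbit inside an ambient smooth superscheme by means of a representation of $G$ for which the stabilizer of a distinguished ``point'' is precisely $H$. In the classical story one uses the projective space $\bP(V)$ and a line $L$; in our setting the role of $\bP(V)$ will be played by a Grassmannian superscheme $\Gr(V)$, and the role of $L$ by a graded subspace $W \subset V$ of appropriate superdimension.

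First I would establish a super analogue of Chevalley's theorem producing such data, namely the existence of a finite-dimensional rational $G$-module $V$ (a super vector space) and a graded subspace $W \subset V$ such that, functorially in a supercommutative algebra $A$,
\[
H(A) \;=\; \bigl\{ g \in G(A) : g \cdot (W \otimes A) = W \otimes A \bigr\}.
\]
Mirroring the classical argument, one takes the ideal $I \subset \cO(G)$ defining $H$, chooses a finite-dimensional $G$-stable graded subspace $U \subset \cO(G)$ containing a generating set for $I$, and sets $W := U \cap I$. The assignment $g \mapsto g \cdot W$ then defines a morphism of functors $\pi : G \to \Gr(V)$ whose fiber over $[W]$ is $H$, so that $\pi$ factors through $G/H$ and identifies the latter, at the functor level, with the orbit $Y$ of $[W]$.

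The heart of the argument, and the main obstacle I expect, is to realize $Y$ as a locally closed subsuperscheme of $\Gr(V)$ and to verify that it represents the etale sheafification of $T \mapsto G(T)/H(T)$. Since complex points do not determine the super-structure, as emphasized in the introduction, one cannot conclude by purely set-theoretic considerations as in the classical case. Instead, I would construct an \emph{etale-local section} of $\pi$ over a neighborhood of $[W]$ using the differential/infinitesimal techniques of \cite{flv, bcf}, and then translate this section by elements of $G(\C)$ to cover all of $Y$. Granted such a section, two $T$-points $g_1, g_2 \in G(T)$ have the same image in $Y$ precisely when they differ etale-locally by an element of $H(T)$, which is exactly the condition for them to coincide in the etale sheafification of $G/H$. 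Smoothness of $Y$ then follows from the smoothness of $G$: the etale-local sections exhibit $G$ as etale-locally isomorphic to $Y \times H$ near the identity, and smoothness descends under etale covers.
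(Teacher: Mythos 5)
Your proposal follows essentially the same route as the paper: replace $\bP(V)$ by a Grassmannian superscheme, invoke the Chevalley-type result that $H$ is the stabilizer of a graded subspace $W\subset V$ (Prop.~\ref{fixedsubspace}), realize the quotient as the orbit of the point $[W]$, and identify the etale sheafification of $T\mapsto G(T)/H(T)$ with that orbit by means of etale-local sections of the orbit map. The one difference is ordering: the paper first proves smoothness of the orbit directly, by matching a neighbourhood of $p$ with the ``big cell'' subgroup $N_G$, and only then extracts the etale sections from smoothness of $a_p$ (Props.~\ref{keyprop}, \ref{etale-sec-prop}), whereas you propose to construct the sections first and deduce smoothness from them --- a workable order, provided you make the local section explicit exactly as the paper's $N_G$ computation does.
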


We shall prove this result in several steps.
In Sec. \ref{cat-sec} we give some preliminaries and notation on algebraic
supervarieties and superschemes, while in Sec. \ref{smooth-sec}
we establish some results on smoothness.
In Sec. \ref{etale-sec} 
we prove the representability of the etale sheafification of the
functor $T \lra G(T)/H(T)$, when $H$ is the stabilizer of a point
for an action of $G$ on a superscheme.
Finally, in Sec. \ref{quot-sec} we give our main result, Thm \ref{main}
and a comparison with \cite{mz1} and the definition
by Brundan in \cite{brundan}.

\bigskip
{\bf Acknoledgements}. We are indebted with prof. V.S. Varadarajan
for many illuminating discussions, for all the help and encouragement
given to us through the many years of mutual
interactions. We also thank Prof. D. Gieseker, Prof. A. Maffei
and Prof. T. Graber
for helpful comments. R.F and S.D.K. wish to thank
the UCLA Department of Mathematics for the kind hospitality while
this work was done.

\section{Supervarieties
and Superschemes} \label{cat-sec}

In this section we collect some facts of supergeometry.
For more details see \cite{dm, ma, ccf, vsv2}.

\medskip
Let $\C$ be our ground field. 
Let $\salg$ be the category  of commutative superalgebras
and let $A=A_0 \oplus A_1 \in \salg$. 
Let us consider a non-zero 
$f \in A_0$ and $A_f$ the localization of
the $A_0$-module $A$ at $f$.
The assignment:
\beq \label{sheaf-assign}
U_f:=\{x \in |\Spec(A_0)| \, | \, f(x) \neq 0\} \lra A_f
\eeq
defines a $\cB$-sheaf on $|\Spec(A_0)|$.
Hence, there exists a unique sheaf 
of superalgebras $\cO_A$ on $|\Spec(A_0)|$ such that
$\cO_A|_{U_{f}}=A_f$. 

\begin{definition}
We define 
\textit{affine superscheme} $X$ associated with $A$
the pair $X=(|X|, \cO_A)$, where $|X|$ is the spectrum $|\Spec(A_0)|$ of the
ordinary algebra $A_0$, 
while $\cO_A$ is the sheaf described above.
The \textit{reduced superscheme} $X_r$ underlying $X$ is the ordinary
scheme associated with $A_r=A/J_A$, where $J_A$ is the ideal
of the odd elements in $A$. 

We shall also denote with $\cO_X$ the
sheaf of the superscheme $X$ and with $\cO(X)$ the superalgebra $A$.

\smallskip
A \textit{morphism} $f:X \lra Y$ 
of affine superschemes is a pair $(|f|,f^*)$, where
$|f|:|X|\rightarrow |Y|$ is a continuous map and 
$f^{\ast}:\mathcal{O}_Y\rightarrow f_{\ast}\mathcal{O}_X$ is a map 
of sheaves of superalgebras, such that
$f^{\ast}_p:\mathcal{O}_{Y,|f|(p)}\rightarrow \mathcal{O}_{X,p}$ 
is a local morphism for all $p$ in $|X|$.


\smallskip
We define \textit{superscheme} a pair $X=(|X|,\cO_X)$ consisting
of a topological space $|X|$ and a sheaf of superalgebras $\cO_X$, 
which is locally isomorphic to an affine superscheme.   
\end{definition}

\begin{definition}\label{subvar-def}
Let $X$ be
an affine superscheme, $\cO(X)$ the corresponding superalgebra.
We say that $S$ is a \textit{subscheme} of $X$, if $S$ is the affine
superscheme corresponding to the superalgebra $\cO(X)/I$ for $I$ ideal
in $\cO(X)$. 
If $X=(|X|,\cO_X)$ is a superscheme, 
we say $S=(|S|, \cO_S)$ is a  \textit{subscheme} of $X$, if $|S|$ is
a closed subspace of $|X|$ and $\cO_S=\cO_X/\cI$, where $\cI$ is an ideal sheaf
with the following property. For any affine cover $\{U_i\}$ of $X$, $\cI(U_i)$
is an ideal in $\cO_X(U_i)$, $\cO_S(U_i)=\cO_X(U_i)/\cI(U_i)$.
 and on such
$U_i$ the sheaf $\cO_S|_{U_i}$ is obtained starting from
the superalgebra $\cO_S(U_i)$ as in (\ref{sheaf-assign}).

\end{definition}
We now come to the functor of points.

\begin{definition} \label{Tpt}
\index{$T$-point}
Let $S$ and $T$ be superschemes.  
A \textit{$T$-point} of $S$ is 
a morphism $T \longrightarrow S$.  We denote the set of all $T$-points 
by $S(T)$.  
We define the \textit{functor of points} of the superscheme $S$ 
as the functor:
$$
S: \text{(sschemes)}^o \lra \sets, \quad T \mapsto S(T), \quad
S(\phi)(f)=f \circ \phi,
$$
where $\text{(sschemes)}$ denotes the category of superschemes,
$\sets$ the category of sets and
the index $o$ as usual refers to the opposite category.

\end{definition}
By a common abuse of notation the superscheme $S$ and the
functor of points of $S$ are denoted with the same letter;
whenever is necessary to make a distinction, we shall write
$h_S$ for the functor of points of $S$. 

\begin{definition}\label{affinesuperalg-def}
Let $A$ be a commutative superalgebra, $J_A$ the ideal generated by
the odd elements. We say that $A$ is 
an \textit{affine superalgebra}, if
$A_0$ is a finitely generated superalgebra, 
such that its reduced associated
algebra $A_{r}=A/J_A$ 
is an affine algebra (i.e. finitely generated and with no nilpotents) and
$A_1$ is a finitely  generated $A_0$-module. 

We say that $X$ is an \textit{affine supervariety}, if $X=(|\uspec A|,\cO_A)$
and $A_r$ is an integral domain, i.e. $X_r$ is
an ordinary affine variety. A \textit{supervariety} $X$ is a superscheme
which is locally isomorphic to an affine supervariety.
\end{definition}

\begin{remark}
We are also interested in the functor of points of algebraic
supervarieties, which
are a subcategory of the category of 
superschemes. The category of affine superschemes
is equivalent to the category of commutative superalgebras
(see \cite{ccf} Ch. 10), moreover the functor of points of a superscheme
is determined by its behaviour on affine superschemes. 
We can then regard the functor of points of
an algebraic supervariety (or superscheme) $X$ as starting from the
category of commutative superalgebras, that is
$X: \salg \lra \sets$, $X(\phi)(f)=\phi \circ f$.
\end{remark}


\section{Smooth morphisms}\label{smooth-sec}

We now introduce the notion of {\emph smooth morphism of 
relative dimension}. For the ordinary setting see \cite{mo} Ch. 5.

\begin{definition}\label{smooth-etale-def}
We say that a morphism of superschemes
$f:X \lra Y$ is \textit{smooth} at $x \in |X|$
of relative dimension $m|n$, if there exists two affine neighbourhoods
$U \subset X$ and $V \subset Y$ such that:
$$
\xymatrix{
U \ar[d] \ar@{^{(}->}[r] & \uspec R[x_1 \dots x_{m+r},\xi_1,\dots,\xi_{n+s}]/
(f_1,\dots, f_r,\phi_1,\dots, \phi_s) \ar[d]\\
V      \ar@{^{(}->}[r]     & \uspec R}
$$
and the rank of the Jacobian is maximal, i.e. 
$$
\rk\frac{\partial(f_i,\phi_j)}{\partial(x_k,\xi_l)}(x)=r|s
$$
($1 \leq i \leq r$, $1 \leq j \leq s$, $1 \leq k \leq m+r$, $1 \leq l \leq n+s$).
$f$ is \textit{smooth} of relative dimension $m|n$, 
if it is smooth of relative dimension $m|n$ at all $x \in |X|$.

We say that a morphism of superschemes is \textit{etale},
if it is smooth of relative dimension $0|0$.

We say that $x \in |X|$ is a \textit{smooth point}, if
the corresponding morphism $X \lra \C$ is smooth ($|X|$ is
identified with $X(\C)$, see \cite{ccf} Ch. 10, 10.6.4). The superscheme 
$X$ is \textit{smooth}, if all $x \in |X|$ are smooth. 
\end{definition} 

This notion of smoothness of a superscheme $X$
is equivalent to the one 
in \cite{fi1} and \cite{mz1, mz2}.

\begin{proposition}\label{smooth-prop}
A
morphism of superschemes $f:X \lra Y$ is smooth of relative
dimension $m|n$ at $x \in |X|$ if and only if  
there
exist an open $V\subset Y$, $U=f^{-1}(V)\subset X$  
 ($x \in |U|$) such that $f=\pi \circ g$,
$$
U \stackrel{g}\lra V \times \C^{m|n} \stackrel{\pi}\lra V
$$
where $\pi$ is the projection and $g$ is etale.
\end{proposition}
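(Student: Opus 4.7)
The plan is to verify both implications directly from Definition \ref{smooth-etale-def}, viewing $\pi: V\times\C^{m|n}\lra V$ as the ``trivial'' smooth morphism in which the coordinates of $\C^{m|n}$ play the role of free generators, so that the super-Jacobian decomposes neatly under the factorization $f = \pi \circ g$.

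For the implication $(\Leftarrow)$, assume $f=\pi\circ g$ with $g$ etale. Shrink to affine neighbourhoods $V'\subseteq V$ of $|f|(x)$ and $U'\subseteq U$ of $x$ with $g(U')\subseteq V'\times\C^{m|n}$, so that the etale presentation of $g|_{U'}$ gives an embedding $U'\hookrightarrow \uspec R'[z_1,\ldots,z_r,\zeta_1,\ldots,\zeta_s]/(f_1,\ldots,f_r,\phi_1,\ldots,\phi_s)$ over $\uspec R' = V'\times\C^{m|n}$ with Jacobian of rank $r|s$ at $x$. Writing $R'=R[y_1,\ldots,y_m,\eta_1,\ldots,\eta_n]$ with $R=\cO(V')$, the same embedding realizes $U'$ over $V'$ with $(m+r)|(n+s)$ generators and unchanged defining ideal. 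Since no $f_i$ or $\phi_j$ involves $y$ or $\eta$, the enlarged super-Jacobian is block-structured: the $(z,\zeta)$-block retains rank $r|s$ and the $(y,\eta)$-columns vanish identically, so the total rank is still $r|s$ and the counts of free generators are $m|n$, exactly the condition for $f$ to be smooth of relative dimension $m|n$ at $x$.

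For the implication $(\Rightarrow)$, apply the smoothness definition at $x$ to get affine $U_0\subseteq X$ and $V_0=\uspec R\subseteq Y$ with an embedding into $\uspec R[x_1,\ldots,x_{m+r},\xi_1,\ldots,\xi_{n+s}]/(f_1,\ldots,f_r,\phi_1,\ldots,\phi_s)$ whose super-Jacobian has rank $r|s$ at $x$. Maximality of the rank says that some $r|s$-submatrix is invertible at $x$; after relabelling generators, take this to be $\partial(f_i,\phi_j)/\partial(x_{m+1},\ldots,x_{m+r},\xi_{n+1},\ldots,\xi_{n+s})$. Define $g:U_0\lra V_0\times\C^{m|n}=\uspec R[y_1,\ldots,y_m,\eta_1,\ldots,\eta_n]$ by $y_k\mapsto x_k$ and $\eta_l\mapsto \xi_l$. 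Over the enlarged base the same ideal presents $U_0$ with the remaining $r|s$ generators $x_{m+1},\ldots,\xi_{n+s}$, and the associated Jacobian is precisely the selected invertible submatrix; hence $g$ is etale on the Zariski-open $U^\circ\subseteq U_0$ where this minor remains a unit, and $f=\pi\circ g$ by construction. To meet the clause $U=f^{-1}(V)$, replace $V$ by an affine subneighbourhood of $|f|(x)$ in $V_0$ whose $f$-preimage lies inside $U^\circ$, possible by continuity of $|f|$. The substance of the argument is thus the block decomposition of the super-Jacobian and the super implicit-function reading of maximal rank; the only slightly fussy step, and the one I expect to require the most care, is this last topological shrinking to arrange $U = f^{-1}(V)$ exactly.
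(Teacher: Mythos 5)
Your argument is essentially the paper's: both directions come down to factoring $f^*$ as $R\lra R[y_1,\ldots,y_m,\eta_1,\ldots,\eta_n]\lra R[x_1,\ldots,x_{m+r},\xi_1,\ldots,\xi_{n+s}]/(f_i,\phi_j)$ and identifying the maximal-rank condition of Definition~\ref{smooth-etale-def} with etaleness of the second map; your $(\Rightarrow)$ is in fact more careful than the paper's, which tacitly assumes the invertible $r|s$ minor sits in the columns of the last $r$ even and last $s$ odd generators, whereas you relabel explicitly and restrict to the locus where that minor is a unit. Two small corrections. In $(\Leftarrow)$, the relations $f_i,\phi_j$ live in $R'[z,\zeta]$ with $R'=R[y,\eta]$, so they generally \emph{do} involve $y$ and $\eta$ through their coefficients, and the $(y,\eta)$-columns of the enlarged Jacobian need not vanish; the conclusion survives anyway, because a super Jacobian with $r|s$ rows has rank at most $r|s$ and the $(z,\zeta)$-block already attains it. In $(\Rightarrow)$, the final shrinking cannot literally achieve $U=f^{-1}(V)$: continuity only makes $f^{-1}(V)$ open, and it may contain components or points far from $x$ that never enter $U^\circ$ however much $V$ is shrunk. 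That clause should be read, as the paper itself implicitly does (its proof ignores it, and Prop.~\ref{etale-sec-prop} drops it), as asking only for opens $U\ni x$ and $V\supseteq f(U)$; with that reading your argument is complete.
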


\begin{proof} 
One direction is clear, since the
composition of smooth morphisms is smooth. 
Since the question is local, we can look at superalgebra maps, that is
$f^*: R$ $\lra$  $R[x_1, \dots, x_m,\dots x_{m+r},\xi_1, \dots, \xi_n, \dots
\xi_{n+s}]/(f_1, \dots, f_r,\phi_1, \dots, \phi_s)$. 
We can write:
$$
\begin{array}{l}
\begin{array}{lr}
R \stackrel{\pi^*}\lra R[x_1, \dots, x_m,\xi_1, \dots, \xi_n] \lra
& \quad \quad \quad
\end{array}\\
\begin{array}{lr}
\quad \quad \quad & 
\stackrel{g^*}\lra R[x_1, \dots, x_m,\dots x_{m+r},\xi_1, \dots, \xi_n, \dots
\xi_{n+s}]/(f_1, \dots, f_r,\phi_1, \dots, \phi_s) 
\end{array}
\end{array}
$$
with
$$
\rk\frac{\partial(f_i,\phi_j)}{\partial(x_k,\xi_l)}(x)=r|s
$$
by the very definition of $g$ etale, the result follows immediately.
\end{proof}

\begin{lemma}\label{etale-lem}
Let $f:X \lra Y$ be a smooth morphism of superschemes
of relative dimension $m|n$.
Then, for any morphism $Y' \lra Y$ we have
that $\pr_2: X \times_Y Y' \lra Y$ is smooth of relative
dimension $m|n$.
$$
\xymatrix{
X \times_Y Y' \ar[d]^{\pr_1} \ar[r]_{\pr_2}
& Y' \ar[d]
\\
 X \ar[r]   & Y
}
$$
In particular,
if $f:X \lra Y$ is etale, also $\pr_2: X \times_Y Y' \lra Y$  is
etale.
\end{lemma}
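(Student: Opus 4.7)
The plan is to reduce to the local affine picture provided by Definition~\ref{smooth-etale-def} and verify that the Jacobian rank condition is preserved by base change. Since smoothness is a pointwise condition on $|X \times_Y Y'|$, we fix an arbitrary $x' \in |X \times_Y Y'|$ with images $x := \pr_1(x') \in |X|$ and $y' := \pr_2(x') \in |Y'|$; by the universal property of the fiber product, $y'$ and $x$ map to a common point $f(x) \in |Y|$. Since $f$ is smooth at $x$ of relative dimension $m|n$, Definition~\ref{smooth-etale-def} furnishes affine neighborhoods $U \subset X$ of $x$ and $V = \uspec R \subset Y$ of $f(x)$ together with a closed embedding
$U \hookrightarrow \uspec R[x_1,\dots,x_{m+r},\xi_1,\dots,\xi_{n+s}]/(f_1,\dots,f_r,\phi_1,\dots,\phi_s)$
whose Jacobian has super-rank $r|s$ at $x$. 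By continuity of $Y' \lra Y$, we may shrink $Y'$ to an affine neighborhood $V' = \uspec R' \subset Y'$ of $y'$ whose image lies in $V$.

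The pullback $U \times_V V'$ is then an affine open neighborhood of $x'$ in $X \times_Y Y'$, and since fiber products of affine superschemes correspond to tensor products of superalgebras, it is a closed subscheme of $\uspec R'[x_1,\dots,x_{m+r},\xi_1,\dots,\xi_{n+s}]$ cut out by $(f_1',\dots,f_r',\phi_1',\dots,\phi_s')$, where primes denote the images under the base change $R \ra R'$. This places $\pr_2$ over $V'$ into exactly the shape required by Definition~\ref{smooth-etale-def} for relative dimension $m|n$, so only the Jacobian condition remains to be verified at $x'$.

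The key observation is that differentiation in the ``new'' variables $x_k,\xi_l$ commutes with the base change from $R$ to $R'$: the $(i,k)$ entry of the pulled-back even Jacobian at $x'$ is simply the image of the $(i,k)$ entry of the original Jacobian at $x$ under the residue field extension $\kappa(x) \hookrightarrow \kappa(x')$, and analogously for the odd partials and for the $\phi_j$. Because the rank of each homogeneous block is invariant under field extensions, the super-rank of the pulled-back Jacobian at $x'$ remains $r|s$. This establishes smoothness of $\pr_2$ of relative dimension $m|n$ at every $x'$, and specializing to $m=n=0$ gives the ``in particular'' assertion for etale morphisms.

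The argument is essentially formal once the local presentation is in place; the only delicate point is verifying that the super-rank of the Jacobian behaves well under base change, which is handled by decomposing into even and odd blocks and invoking the standard invariance of ordinary rank under field extensions. I therefore expect no serious obstacle beyond a careful bookkeeping of the fiber product on affine pieces and the commutation of partial derivatives with the ring map $R \ra R'$.
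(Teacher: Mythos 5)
Your argument is correct and follows essentially the same route as the paper: reduce to the affine local presentation of Definition~\ref{smooth-etale-def}, identify the base-changed piece with $\uspec$ of $R[x_i,\xi_j]/(f_k,\phi_l)\otimes_R R' \cong R'[x_i,\xi_j]/(f_k,\phi_l)$, and observe that the Jacobian rank condition persists. You simply spell out the rank-preservation step (partials commute with the base change and rank is stable under residue field extension) that the paper leaves implicit.
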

\begin{proof} 
Since the question is local, we can assume to be in the affine case.
$$
\xymatrix{
  R[x_i,\xi_j]/(f_k,\phi_l) \otimes_R S & S \ar[l] \\ 
  R[x_i,\xi_j]/(f_k,\phi_l) \ar[u]_{\pr_1^*} &\ar[l]\ar[u]  R
}
$$
Since $R[x_i,\xi_j]/(f_k,\phi_l) \otimes_R S \cong S[x_i,\xi_j]/(f_k,\phi_l)$
we obtain the result.
\end{proof}


We now make some observations on Grothendieck topologies. For
more details see \cite{fga} for the ordinary setting and \cite{fz}
for the supergeometric one.

\begin{observation}\label{sites}
Let us consider the category $\sschemes$ of superschemes and 
define coverings of a superscheme $U$ to be collections of etale maps 
whose images cover $U$. 
This is a Grothendieck topology, 
because of the existence and the properties of the 
fibered product in $\sschemes$, together with Lemma \ref{etale-lem}. 
This topology defines the \textit{super Etale site}.
Similarly, we can define another Grothendieck topology 
by taking Zariski coverings, i.e. collections of open
embeddings, and obtain the \textit{super Zariski site} (see
\cite{fz}).  Notice that  
if $U_i \lra U$ is a Zariski covering of a superscheme, then it is 
also an etale covering. Hence the etale topology is finer than the Zariski one.
By the previous observation, we immediately have that a sheaf
on the etale topology is a sheaf in the Zariski one, but not vice-versa
(see \cite{fz} Sec. 2, Prop. 2.5).
\end{observation}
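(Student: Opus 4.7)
The plan is to verify the axioms of a Grothendieck pretopology for each of the two proposed collections of coverings, and then to compare them. Concretely, I need to check (i) that isomorphisms are coverings, (ii) stability under base change, and (iii) stability under composition.

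For the etale topology: axiom (i) is immediate because any isomorphism $X \to X$ is smooth of relative dimension $0|0$, the Jacobian condition in Definition \ref{smooth-etale-def} being vacuous in this case. Axiom (ii) is exactly Lemma \ref{etale-lem}, combined with the standard existence of fibered products in $\sschemes$. Axiom (iii) --- that composition of etale maps is etale --- can be checked locally on affine charts: concatenating two presentations of the kind in Definition \ref{smooth-etale-def} gives a combined Jacobian of block-triangular shape, whose super-rank is the sum of the ranks of the diagonal blocks and is therefore maximal. This is already implicit in the opening sentence of the proof of Proposition \ref{smooth-prop}.

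The Zariski site is handled by the same template but more easily: open embeddings are plainly stable under base change and composition, and every isomorphism is an open embedding. The comparison of the two sites reduces to showing that every Zariski open embedding is etale. Locally such an embedding corresponds to a principal localization $R \to R_f$, which fits the presentation of Definition \ref{smooth-etale-def} with $m|n = 0|0$ and empty relation ideal; the Jacobian condition is then trivially satisfied. Hence every Zariski covering is an etale covering, so the etale topology is finer. It follows formally that any etale sheaf restricts to a Zariski sheaf, its sheaf axiom being required for a strictly larger family of coverings, while the failure of the converse is exhibited by the example cited from \cite{fz} Sec.~2, Prop.~2.5.

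The only step with real content is axiom (iii) for the etale site: composition of etale morphisms. In the super setting this requires checking that the block-triangular super-Jacobian retains maximal rank in the $\mathbb{Z}/2$-graded sense, which is the one place where the super structure has to be tracked explicitly rather than invoked formally. Everything else is a direct transcription of the classical verification, so I do not anticipate any genuine obstacle.
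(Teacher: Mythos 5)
Your verification of the pretopology axioms is correct and is essentially a fleshed-out version of the paper's own one-line justification (existence and base-change properties of fibered products plus Lemma \ref{etale-lem}, with composition of etale maps left implicit, as in the opening sentence of the proof of Proposition \ref{smooth-prop}); the comparison of the two sites and the formal deduction that etale sheaves are Zariski sheaves also match the paper. One small correction: a principal open $U_f \hookrightarrow \uspec R$ is not presented by an empty relation ideal with $m|n=0|0$ (that would force $U=V$), but by $R_f \cong R[t]/(tf-1)$, i.e.\ with $r|s=1|0$ and Jacobian $\partial(tf-1)/\partial t = f$ invertible on $U_f$, which is what exhibits an open embedding as etale in the sense of Definition \ref{smooth-etale-def}.
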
 

As in the ordinary setting, any etale morphism will admit
an {\sl etale section}; this fact is 
essential for our construction of quotients.

\begin{proposition}\label{etale-sec-prop}
Let $f:X \lra Y$ be a morphism of superschemes smooth
of relative dimension $m|n$ at $p \in |X|$. Then
there exist open $V \subset Y$, $U \subset X$, $p \in |U|$,
an etale cover $\phi:W \lra V$ and a morphism $W \lra U$ 
making the following diagram commute:
$$
\begin{array}{ccc}
U &\subset &X \\
\uparrow & & \downarrow \\ 
W & \stackrel{\phi}\lra & V \subset Y
\end{array}
$$
\end{proposition}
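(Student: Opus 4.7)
The plan is to use the local factorization of Proposition~\ref{smooth-prop} to reduce to the case of a product projection $V \times \C^{m|n} \to V$, which admits an obvious section, and then to transport that section back through the etale factor by base change.

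First, I would apply Proposition~\ref{smooth-prop} at $p$: after shrinking $X$ and $Y$ to affine opens $U \ni p$ and $V \ni f(p)$, I obtain a factorization
\[
f|_U \colon U \stackrel{g}\lra V \times \C^{m|n} \stackrel{\pi}\lra V
\]
with $g$ etale. Writing $g(p) = (f(p), a)$ for some $a \in \C^{m|n}(\C)$, I would introduce the translated zero section $s\colon V \to V \times \C^{m|n}$, $y \mapsto (y, a)$, which satisfies both $\pi \circ s = \id_V$ and $s(f(p)) = g(p)$. The point of choosing $s$ through $g(p)$ rather than through the origin is to ensure the base point $p$ will lift into the fibre product below.

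Second, form the fibre product
\[
W := U \times_{V \times \C^{m|n}} V
\]
with structure maps $g$ and $s$. By Lemma~\ref{etale-lem}, the second projection $\phi\colon W \to V$ is etale, being the base change of the etale morphism $g$ along $s$. The first projection $W \to U$ will then serve as the morphism in the statement, and the required commutativity follows formally from $\pi \circ s = \id_V$ together with the universal property of the fibre product:
\[
f|_U \circ (W \to U) \;=\; \pi \circ g \circ (W \to U) \;=\; \pi \circ s \circ \phi \;=\; \phi.
\]

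Finally, I would verify that $\phi$ can be arranged to be surjective, i.e., a genuine etale cover. Since $s(f(p)) = g(p)$, the pair $(p, f(p))$ defines a point of $W$, so $f(p) \in \phi(W)$; because etale morphisms are open, $\phi(W)$ is an open neighbourhood of $f(p)$ in $V$. Replacing $V$ by $\phi(W)$ and $U$ by $U \cap f^{-1}(\phi(W))$ makes $\phi$ surjective, as required. The bulk of the geometric content is absorbed into Proposition~\ref{smooth-prop}; once the smooth morphism is decomposed into an etale part followed by an affine projection, the existence of an etale section is essentially automatic, and the only real subtlety is the bookkeeping around the base point, which is handled by the choice of $a$.
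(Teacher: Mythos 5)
Your proof is correct and follows essentially the same route as the paper: factor $f$ locally via Proposition~\ref{smooth-prop} as an etale morphism $g$ followed by the projection $V \times \C^{m|n} \to V$, choose a section $s$ of the projection, and base-change $g$ along $s$ to obtain $W = U \times_{V \times \C^{m|n}} V \to V$ etale (Lemma~\ref{etale-lem}) together with the map $W \to U$. Your additional bookkeeping --- taking the section through $g(p)$ so that $W$ contains a point over $f(p)$, and shrinking $V$ to make $\phi$ cover it --- is a refinement the paper leaves implicit, but the argument is the same.
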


\begin{proof}
By Prop. \ref{smooth-prop}, we have that there exists $U$ open in $X$
and $V$ open in $Y$ such that  
$$
U \lra V \times \C^{m|n} \lra V
$$
We can write immediately a section $s$ for the projection,
$s:V \lra V \times \C^{m|n}$. 
$$
\xymatrix{
W =U \times_{ V \times \C^{m|n}} V \ar[d] \ar[r] & V \ar[d]_s\\
U \ar[r]^g & V \times \C^{m|n}}
$$
By Lemma \ref{etale-lem}, since $g$ is etale, we have that 
$\pr_2:W=U \times_{ V \times \C^{m|n}} V \lra V$ is also etale.
\end{proof}

The morphism $W \lra U$ is called a {\it local etale section} of
$f: X\lra Y$.


\begin{proposition}\label{keyprop}
Let $f:X \lra Y$ be a morphism of smooth superschemes of finite type,
$X$ an algebraic variety.
If $|f|$ is surjective, 
$(df)_x:T_xX \lra T_{f(x)}Y$ is surjective and
$\dim T_xX - \dim T_{f(x)}Y=m|n$ for all $x \in |X|$, then 
$f$ is smooth of relative dimension $m|n$. 
\end{proposition}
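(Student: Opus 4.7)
My plan is to reduce to Proposition \ref{smooth-prop}: since smoothness of relative dimension $m|n$ is a local condition on $X$, it will suffice, for each $x\in|X|$, to construct an affine neighborhood $V=\uspec R\subset Y$ of $f(x)$, an affine neighborhood $U\subset f^{-1}(V)$ of $x$, and an etale morphism $g:U\to V\times\C^{m|n}$ such that $f|_U=\pi\circ g$, where $\pi$ denotes the projection onto $V$. Proposition \ref{smooth-prop} will then yield smoothness of relative dimension $m|n$ at $x$, and since $x$ is arbitrary the conclusion will follow.

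To construct $g$, I will exploit the surjectivity of $(df)_x$ and the relative tangent dimension hypothesis. Dualizing, $(df)^{*}_x:T^{*}_{f(x)}Y\hookrightarrow T^{*}_xX$ is injective with cokernel of super dimension $m|n$. I will choose $m$ even elements $z_1,\dots,z_m\in(\cO_X)_0$ and $n$ odd elements $\zeta_1,\dots,\zeta_n\in(\cO_X)_1$, defined on a suitable affine neighborhood $U$ of $x$, whose classes in $\mathfrak m_x/\mathfrak m_x^2$ project to a basis of this cokernel. Together with $f$, these data define the morphism $g:U\to V\times\C^{m|n}$, whose composition with $\pi$ recovers $f|_U$. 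On tangent spaces, $(dg)_x$ will be an isomorphism: its $V$-component is $(df)_x$, and the $dz_i|_x, d\zeta_j|_x$ have been chosen precisely so as to split the short exact sequence
\[
0\longrightarrow \ker(df)_x\longrightarrow T_xX\stackrel{(df)_x}{\longrightarrow} T_{f(x)}Y\longrightarrow 0.
\]

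It will then remain to upgrade this pointwise isomorphism to etaleness on a Zariski neighborhood of $x$. Both $X$ (at $x$) and $V\times\C^{m|n}$ (at $g(x)$) are smooth of the same super dimension, and $X$ is of finite type; after further shrinking, both will admit local presentations as in Definition \ref{smooth-etale-def}, so that $g$ over $V$ is cut out by polynomial equations whose relative Jacobian at $x$ has maximal rank, as a consequence of $(dg)_x$ being an isomorphism. Upper semi-continuity of the corank of the Jacobian will extend maximality to a neighborhood $U'$ of $x$, making $g|_{U'}$ etale and producing the factorization required by Proposition \ref{smooth-prop}. The hard step will be this last one: one must assemble a super-affine presentation of $g$ over $V$ in which the smoothness of source and target, together with the isomorphism $(dg)_x$, jointly deliver the Jacobian condition of Definition \ref{smooth-etale-def} on an open set rather than only at the single point $x$.
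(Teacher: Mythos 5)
Your strategy --- build $g:U\lra V\times\C^{m|n}$ from $f$ together with functions $z_i,\zeta_j$ whose classes in $\fm_{X,x}/\fm_{X,x}^2$ complement the image of $(df)^*_x$, check that $(dg)_x$ is an isomorphism, and then invoke Proposition \ref{smooth-prop} --- is viable, and it is organized differently from the paper's proof, which never constructs such a $g$. The paper verifies Definition \ref{smooth-etale-def} directly: using that $X$ is of finite type it presents $U$ over $V=\uspec R$ as $\uspec R[x_1,\dots,x_{m+r},\xi_1,\dots,\xi_{n+s}]/(f_i,\varphi_j)$, uses smoothness of $X$ together with the hypothesis $\dim T_xX-\dim T_{f(x)}Y=m|n$ to choose exactly $r$ even and $s$ odd generators whose images $\fbar_i,\varphibar_j$ in the cotangent space of the ambient affine superspace are linearly independent, and then uses the surjectivity of $(df)_x$ --- dually, the injectivity of $\fm_{Y,f(x)}/\fm_{Y,f(x)}^2\lra\fm_{X,x}/\fm_{X,x}^2$ --- to conclude by linear algebra that the $\fbar_i,\varphibar_j$ remain independent in the cotangent space \emph{relative to the base}, which is precisely the condition $\rk\,\partial(f_i,\varphi_j)/\partial(x_k,\xi_l)(x)=r|s$.

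The weakness of your proposal is that all of the content is deferred to the step you yourself flag as hard: that a morphism of smooth superschemes of finite type inducing an isomorphism on tangent spaces at $x$ is etale near $x$. That statement is essentially the case $m|n=0|0$ of the proposition you are proving (localized at $x$), so you have not reduced the difficulty, only relocated it; and your sketch of it ("smoothness of source and target, together with the isomorphism $(dg)_x$, jointly deliver the Jacobian condition") omits the one genuine argument. Concretely, two things must be supplied: (i) that near $x$ the ideal cutting $U$ out of $(V\times\C^{m|n})\times\C^{a|b}$ is generated by exactly $a$ even and $b$ odd elements with independent differentials --- this is where smoothness of $X$ and the dimension count enter; and (ii) that independence of these differentials in the absolute cotangent space implies independence modulo the pullback of the maximal ideal of the base, which requires exactly the injectivity of the pullback on cotangent spaces coming from surjectivity of $(dg)_x$. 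Once (i) and (ii) are written down they apply verbatim to $f$ itself, since surjectivity of $(df)_x$ is already a hypothesis; this is why the paper's one-step argument is the more economical route, while yours, completed honestly, proves the $0|0$ case first and then bootstraps. The semicontinuity point you raise at the end is the easy part (invertibility of an $r|s$ minor is an open condition); it is not where the difficulty lies.
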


\begin{proof} 
The proof follows closely Cor. 5.4.6, Ch. V in \cite{mo}.
We briefly recap here the main steps. The statement is
local, so let $x \in |X|$.  We can factor $f$ as:
$$
\xymatrix{
U \ar[rd]_f \ar@{^{(}->}[r]
&  Y \times \C^{m|n} \ar[d]^p\\
&Y}
$$
where $U\subset X$ is open, $x \in |U|$. In terms of superalgebra
maps this diagram reads:
$$
\xymatrix{
R[x_1,\dots, x_{m+r},\xi_1,\dots,\xi_{n+s}]/(f_i,\varphi_j)  &  
R[x_1,\dots, x_{m+r},\xi_1,\dots,\xi_{n+s}] \ar@{->>}[l]\\
& R \ar[ul]^{f^*} \ar[u]^{p^*}}
$$
with $i=1,\dots, r$, $j=1,\dots, s$. Furthermore
$(f_i,\varphi_j)$ can be chosen such that
$$
\rk \frac{\partial(f_i,\varphi_j)}{\partial(x_k,\xi_l)}=r|s
$$  
This is because we can choose such $f_i,\varphi_j$ so that
their images $\fbar_i,\varphibar_j$ in $\fm_{X,x}/\fm_{X,x}^2$
are linearly independent.

Since $(df)_x:T_xX \lra T_{f(x)}Y$ is surjective, we have
an embedding \break$\fm_{Y,f(x)}/\fm_{Y,f(x)}^2 \subset \fm_{X,x}/\fm_{X,x}^2$.
Using elementary facts of linear algebra, we have that
$\fbar_i,\varphibar_j$ are independent also in
$\fm_{Z,x}/(\fm_{Z,x}^2+\fm_{X,x}\cO_{x,Z})$ for $Z=Y \times \C^{m|n}$.
This latter condition gives the independence of the 
differentials $df_i,d\varphi_j$, hence the result.
\end{proof}

\section{Etale sections and quotients} \label{etale-sec}

In this section we examine supergroup actions and homogeneous superspaces.

\begin{definition}
A {\it supergroup functor} 
is a group valued functor from $\salg$ to $\sets$.
An {\it affine supergroup} is a supervariety whose functor of points is group
valued, that is to say, it associates a group to each superalgebra.
\end{definition}

If $G$ is an affine supergroup, then $G$ is
a closed subgroup of $\rGL(m|n)$ and the superalgebra $\cO(G)$ has a natural
Hopf superalgebra structure (see \cite{ccf} Ch. 11).
Furthermore, $G$ is smooth (see \cite{fi1}).

\begin{definition} Let $V$ be a super vector space.
We define \textit{ linear representation} 
of $G$ in $V$ a morphism $\rho:  G  \lra \rEnd(V)$
where $\rEnd(V)$ are the endomorphism of $V$.
We will also say that $ G $ \textit{ acts} on $V$.

\smallskip
Let $Y$ be a superscheme. We say that $G$ \textit{acts} on $Y$ if 
we have a morphism of superschemes:
$a: G \times Y  \lra  Y$, 
$g,x  \mapsto  a_T(g,x):=g \cdot x$, $x \in Y(T)$,
$g \in G(T)$,
such that: \\
1. $1 \cdot x=x$, $\forall x \in Y(T)$ \\
2. $(g_1g_2) \cdot x=g_1 \cdot (g_2 \cdot x)$, $\forall x \in Y(T)$,
$\forall g_1, g_2 \in G(T)$.

\smallskip\noindent
For $p \in |Y|$ we define the 
\textit{orbit map} $a_p:G \lra Y$ by $a_{p,T}(g)=g \cdot p$
$\forall g \in G(T)$.

\smallskip\noindent
Let $Y$ be smooth.
We say that the action $a$ is \textit{transitive}, 
if there exists a $p \in |Y|$ 
such that $|a_p|$ and $(da_p)_{1_G}$ are surjective. In this case we call
$Y$ an \textit{homogeneous superspace}.
\end{definition}

Notice that according to Prop. \ref{keyprop}, this is equivalent to ask
that $a_p$ is smooth of relative dimension $m|n=\mathrm{ker}(da_p)_{1_G}$.

\begin{proposition}\label{etale-cov-prop}
Let $G$ be an affine supergroup acting transitively on a smooth superscheme $Y$.
Then there exists an etale cover $\{W_i \lra Y\}$ making the following
diagram commute:
$$
\begin{array}{ccc}
U_i &\subset & G \\
\uparrow & & \downarrow \\ 
W_i & \stackrel{\phi_i}\lra &  Y
\end{array}
$$
where the $U_i$ are open and cover $G$.
\end{proposition}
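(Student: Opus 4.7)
The plan is to reduce the proposition to a pointwise application of Prop.~\ref{etale-sec-prop} (local etale sections for smooth morphisms), taking as the underlying smooth morphism the orbit map $a_p: G \lra Y$, where $p \in |Y|$ is the distinguished point witnessing transitivity. The data $(U_x, V_x, W_x)$ produced this way, indexed by $x \in |G|$, will yield the required open cover of $G$ and etale cover of $Y$ almost for free, once $a_p$ is known to be smooth at every point of $|G|$.

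The preliminary step, which I expect to be the main subtlety, is to establish that $a_p$ is smooth of relative dimension $m|n := \dim \ker (da_p)_{1_G}$ \emph{at every point} of $|G|$, as asserted in the remark following the definition of transitivity. The condition of transitivity only gives the surjectivity of $(da_p)_{1_G}$ directly; to extend to all of $|G|$ I would exploit $G$-equivariance. Writing $L_g: G \lra G$ for left multiplication by $g$ and $\tau_g: Y \lra Y$ for the action of $g$, both of which are isomorphisms, one has $a_p \circ L_g = \tau_g \circ a_p$. Differentiating at the identity yields
\[
(da_p)_g = (d\tau_g)_p \circ (da_p)_{1_G} \circ (dL_g)_{1_G}^{-1},
\]
so $(da_p)_g$ is surjective with kernel of dimension $m|n$ for every $g \in |G|$. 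Combined with the surjectivity of $|a_p|$ (from transitivity) and the smoothness of $G$ and $Y$, Prop.~\ref{keyprop} then delivers the smoothness of $a_p$ of relative dimension $m|n$.

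With this in hand, I would apply Prop.~\ref{etale-sec-prop} at each $x \in |G|$ to produce opens $V_x \subset Y$ and $U_x \subset G$ with $x \in |U_x|$, an etale cover $\phi_x: W_x \lra V_x$, and a morphism $W_x \lra U_x$ making the required diagram (with downward arrow $a_p$) commute. Re-indexing these by $i$, the family $\{U_i\}$ covers $G$ by construction. Since each $V_i$ contains the point $a_p(x_i)$ for some $x_i \in |U_i|$ and $|a_p|$ is surjective, the $V_i$ cover $Y$, and hence the composites $W_i \lra V_i \hookrightarrow Y$ form an etale cover of $Y$. The only nontrivial ingredient is the equivariant transport of smoothness from $1_G$ to arbitrary $g \in |G|$; the rest is a direct pointwise assembly of Prop.~\ref{etale-sec-prop}.
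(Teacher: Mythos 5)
Your proposal is correct and follows essentially the same route as the paper, whose proof is simply to apply Prop.~\ref{etale-sec-prop} pointwise to the orbit map $a_p$ and assemble the resulting opens $U_i$ and etale maps $W_i \lra Y$. The only material you add is the explicit equivariance argument (conjugating by $L_g$ and $\tau_g$) showing that $a_p$ is smooth of relative dimension $m|n$ at every point of $|G|$, a step the paper leaves implicit in the remark following the definition of transitivity, where Prop.~\ref{keyprop} is invoked.
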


\begin{proof} Immediate from Prop. \ref{etale-sec-prop}.
\end{proof}

\begin{lemma}\label{key-lem}
Let the notation be as above. 
Let $\al \in Y(Z)$, $Z \in \sschemes$.
Then there exists an etale covering $\{\phi_i:Z_i \lra Z\}$ 
and elements $\be_i \in G(Z_i)$ such that
$$
\begin{array}{ccc}
G(Z_i) & \lra & Y(Z_i)\\
\be_i& \mapsto& \al_i=\al \circ \phi_i
\end{array}
$$
\end{lemma}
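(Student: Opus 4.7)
The plan is to pull back the etale cover $\{W_i \lra Y\}$ provided by Proposition \ref{etale-cov-prop} along the morphism $\al:Z \lra Y$, and use the local lifts $W_i \lra U_i \subset G$ to produce the required $\be_i$.

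First I would invoke Proposition \ref{etale-cov-prop} for the transitive action of $G$ on $Y$ to obtain an etale cover $\{\phi_i:W_i \lra Y\}$ together with morphisms $W_i \lra U_i \subset G$ making the relevant square commute; in particular the composition $W_i \lra U_i \hookrightarrow G \stackrel{a_p}\lra Y$ coincides with $\phi_i$. Then I would form the fibered products
\[
Z_i \;:=\; Z \times_Y W_i, \qquad \psi_i:=\pr_1:Z_i \lra Z, \qquad \pr_2:Z_i \lra W_i.
\]
By Lemma \ref{etale-lem}, since $\phi_i$ is etale, $\psi_i$ is etale. To check that $\{\psi_i:Z_i \lra Z\}$ is indeed a covering, I would verify that the images of $|Z_i|$ cover $|Z|$: given $z \in |Z|$, the point $|\al|(z) \in |Y|$ lies in the image of some $|W_i|$, and the usual description of the underlying space of a fibered product ensures that $z$ lies in the image of $|\psi_i|$.

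Next I would define
\[
\be_i \;:=\; \bigl(Z_i \xrightarrow{\pr_2} W_i \lra U_i \hookrightarrow G\bigr) \;\in\; G(Z_i).
\]
The required compatibility $\be_i \mapsto \al_i = \al \circ \psi_i$ under the map $G(Z_i) \lra Y(Z_i)$ induced by the orbit morphism $a_p$ then follows by chasing the diagram: the image of $\be_i$ in $Y(Z_i)$ is the composite $Z_i \to W_i \to U_i \to G \to Y$, which equals $Z_i \to W_i \to Y$ by the commutativity from Proposition \ref{etale-cov-prop}, and this in turn equals $\al \circ \psi_i$ by the defining property of the fibered product $Z_i = Z \times_Y W_i$.

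The construction is essentially formal once Proposition \ref{etale-cov-prop} is in hand; the only slightly delicate point is confirming that the pullback family $\{\psi_i\}$ is still a covering, i.e.\ that surjectivity of the etale cover $\{W_i \lra Y\}$ on topological spaces survives base change along $\al$. This is the step I expect to require the most care, but it is a standard property of fibered products of superschemes applied through the underlying topological spaces.
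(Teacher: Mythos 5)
Your proposal is correct and follows essentially the same route as the paper: pull back the etale cover of $Y$ from Proposition \ref{etale-cov-prop} along $\al$ to get $Z_i = W_i \times_Y Z$, note the projections to $Z$ are etale by Lemma \ref{etale-lem}, and define $\be_i$ as the composite of the projection to $W_i$ with the etale section into $G$, the required identity holding by commutativity. Your extra check that the pulled-back family still covers $Z$ is a point the paper leaves implicit, but it is the same argument.
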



\begin{proof}
Let $f_i:W_i \lra V_i$ be the etale covering
of $Y$ described  in Prop. \ref{etale-cov-prop} and
$\sigma_i:W_i \lra U_i \subset G$ the corresponding etale
sections. Let $\al:Z \lra Y$
be a $Z$-point of $Y$. Define $Z_i:=W_i \times_{Y} Z$.
We have the diagram:
$$
\begin{array}{ccc}
Z_i=W_i \times_{Y} Z  &\stackrel{g_i}\lra& Z \\
\pr_1\downarrow & & \downarrow  \\
W_i & \stackrel{f_i}\lra & Y
\end{array}
$$
Since the $f_i$ are etale, we have that the $g_i$ are etale.
Take $\be_i:=\sigma_i \circ \pr_1:Z_i \lra G$; by
the very construction $a_{p,Z_i}(\be_i)=\al_i$.  
\end{proof}

Let $H$ be the stabilizer functor of $p \in |Y|$, that is
$H(Z):=\{g \in G(Z) | g \cdot p=p\}$.
This is representable by a closed subgroup of $G$ (see \cite{ccf} Ch. 11).
We can define the functor:
$$
G/H:\sschemes^o \lra \sets, \qquad (G/H)(Z)=G(Z)/H(Z)
$$
the definition on the arrows being clear.

The morphism $a_{p}$ induces a
natural transformation $G/H \lra Y$, 
with $G(Z)/H(Z) \lra  Y(Z)$ injective for all $Z$.
In general, it will not be surjective, however we have
the following (see \cite{fga, fz} for the notion of sheafification
in this context).

\begin{theorem}\label{sheafif-thm}
Let the notation be as above.
The sheafification $\widetilde{G/H}$ in the etale topology
of the functor $Z \lra G(Z)/H(Z)$ is isomorphic to $Y$ and
it is the functor of points of a superscheme.
\end{theorem}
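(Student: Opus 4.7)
The plan is to show that the natural transformation $\Phi: G/H \to h_Y$ induced by the orbit map $a_p$ becomes an isomorphism after etale sheafification. The ingredients required---injectivity on the presheaf level (already observed in the excerpt) and etale-local surjectivity (Lemma \ref{key-lem})---are essentially in hand; the task is to package them properly through the universal property of sheafification.

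First I would verify that the representable functor $h_Y$ is already a sheaf in the etale topology. This amounts to faithfully flat descent for morphisms into a superscheme: given an etale cover $\{Z_i \to Z\}$ and morphisms $\alpha_i: Z_i \to Y$ agreeing on $Z_{ij} := Z_i \times_Z Z_j$, they glue uniquely to a morphism $Z \to Y$. For superschemes this reduces to the affine case and is established as in the classical setting; I would appeal to \cite{fz}. Granted this, $\Phi$ extends uniquely to a morphism of etale sheaves $\widetilde{\Phi}: \widetilde{G/H} \to h_Y$ by the universal property of sheafification, and it suffices to prove $\widetilde{\Phi}$ is an isomorphism of sheaves.

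For injectivity: since sheafification preserves monomorphisms (by left-exactness) and $\Phi_Z: G(Z)/H(Z) \hookrightarrow Y(Z)$ is injective for every $Z$, the induced $\widetilde{\Phi}$ is a monomorphism of sheaves. For surjectivity as a sheaf map, given $\alpha \in Y(Z)$, Lemma \ref{key-lem} produces an etale cover $\{\phi_i: Z_i \to Z\}$ and elements $\beta_i \in G(Z_i)$ with $a_{p,Z_i}(\beta_i) = \alpha \circ \phi_i$. On each $Z_{ij}$ the restrictions $\beta_i|_{Z_{ij}}$ and $\beta_j|_{Z_{ij}}$ map under $a_p$ to the same element of $Y(Z_{ij})$, so by presheaf-level injectivity they define the same class in $(G/H)(Z_{ij})$. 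The classes $[\beta_i] \in (G/H)(Z_i)$ therefore assemble into a section of $\widetilde{G/H}(Z)$ whose image under $\widetilde{\Phi}$ is $\alpha$. Combining the two, $\widetilde{\Phi}$ is an isomorphism of etale sheaves, and hence $\widetilde{G/H}$ is representable by the superscheme $Y$.

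The genuine obstacle lies in the first step, namely a rigorous statement of etale descent for morphisms into a superscheme, since everything afterwards is a formal consequence of the universal property of sheafification applied to the explicit local lifts of Lemma \ref{key-lem}. A secondary subtlety is the compatibility check on overlaps $Z_{ij}$, which however collapses immediately once one invokes the already-noted injectivity of $\Phi$ at the presheaf level; this is really the content that makes the descent data for $\widetilde{G/H}$ tautologically compatible.
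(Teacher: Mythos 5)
Your proof is correct and follows essentially the same route as the paper: factor the transformation induced by the orbit map through the sheafification, use presheaf-level injectivity, and invoke Lemma \ref{key-lem} to produce etale-local lifts whose classes glue to a preimage in $\widetilde{G/H}(Z)$. The only difference is that you make explicit points the paper leaves implicit (that $h_Y$ is an etale sheaf, as in \cite{fz, mz1}, and the compatibility of the classes $[\beta_i]$ on overlaps via presheaf injectivity), which is added detail rather than a different argument.
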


\begin{proof}
By our previous observation, we have a natural transformation:
$\psi:G/H \lra Y$,
that factors as:
$$
G/H \lra \widetilde{G/H} \stackrel{\psi}\lra Y
$$
We want to show that $\psi$ is an isomorphism. 
We only need to show it is
surjective. Let $\al \in Y(Z)$. Then by Lemma \ref{key-lem}
there exists an etale cover $\phi_i:Z_i \lra Z$ 
and elements $\be_i \in G(Z_i)$ such that
$$
\begin{array}{cccc}
a_{p,Z_i}:&G(Z_i) & \lra & Y(Z_i)\\
&\be_i& \mapsto& \al_i=\al \circ \phi_i
\end{array}
$$
Let $\be_i'$ be the projections of the $\be_i$ onto $G(Z_i)/H(Z_i)$.
We have the commutative diagram
$$
\begin{array}{ccc}
Z_i \times_{G/H} Z_j &\lra & Z_j \\
\downarrow & & \downarrow \be_j'\\
Z_i & \stackrel{\be_i'}\lra &G/H
\end{array}
$$
Hence, the $\be_i'$ correspond to a unique $\be \in \widetilde{G/H}(Z)$, 
so this shows that $\widetilde{G/H}(Z) \cong Y(Z)$.
\end{proof}

\section{Quotients}\label{quot-sec}

In this section we prove our main result.

\begin{proposition} \label{fixedsubspace}
Let the $G$ be an affine
algebraic supergroup and $H$ a closed subsupergroup. 
Then, there exists a finite dimensional representation
$\rho$ of $G$ in $V$ and a subspace $W \subset V$, such that:
$$
H(T)=\{g \in G(T) | \rho(g)W=W\}, \, \, \,
$$
\end{proposition}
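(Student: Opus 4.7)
The strategy is to adapt Chevalley's classical construction (\cite{bo}, Ch.~II) to the super setting: I shall realize $H$ as the stabilizer of a homogeneous sub-super-vector space $W$ inside a finite dimensional $G$-stable subspace $V$ of the right regular representation of $G$ on $\cO(G)$. Write $I \subset \cO(G)$ for the Hopf super ideal $\ker(\cO(G) \ra \cO(H))$ cutting out $H$.

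The first step is the standard local finiteness lemma: every finite dimensional homogeneous subspace of $\cO(G)$ is contained in a finite dimensional $G$-stable subspace. Given a homogeneous $f \in \cO(G)$ with $\Delta(f) = \sum_{i=1}^n f_i \otimes h_i$ and the $h_i$ linearly independent of pure parity, coassociativity of $\Delta$ forces $\Delta(f_i) \in \Span\{f_1,\dots,f_n\} \otimes \cO(G)$, so $\Span\{f_i\}$ is $G$-stable and contains $f$ by the counit axiom. By Definition \ref{affinesuperalg-def}, $\cO(G)_0$ is a finitely generated commutative algebra (hence Noetherian by Hilbert's basis theorem) and $\cO(G)$ is a finitely generated $\cO(G)_0$-module, so the super ideal $I$ is finitely generated. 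Pick homogeneous generators $f_1,\dots,f_n$ of $I$, apply the lemma to obtain a finite dimensional $G$-stable $V \subset \cO(G)$ containing them all, and set $W := V \cap I$.

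It remains to verify that $H(T) = \{g \in G(T) \mid \rho(g)(W \otimes \cO(T)) = W \otimes \cO(T)\}$ for every $T \in \salg$. Right translation by $g \in G(T)$ extends $\rho(g)$ to a super $\cO(T)$-algebra automorphism $r_g$ of $\cO(G) \otimes \cO(T)$. If $g \in H(T)$, then $g^* : \cO(G) \ra \cO(T)$ annihilates $I$, and the Hopf super ideal property of $I$ yields $r_g(I \otimes \cO(T)) \subset I \otimes \cO(T)$; combined with $G$-stability of $V$ this gives $\rho(g)(W \otimes \cO(T)) = W \otimes \cO(T)$. Conversely, if $\rho(g)$ preserves $W \otimes \cO(T)$, then $r_g$ preserves the ideal it generates in $\cO(G) \otimes \cO(T)$, which equals $I \otimes \cO(T)$ since $W$ contains the generating set $\{f_i\}$; applying the counit $\epsilon \otimes \id$ to $r_g(f \otimes 1) \in I \otimes \cO(T)$ recovers $g^*(f) \in \epsilon(I)\cdot\cO(T) = 0$ (using $e \in H$), forcing $g^*(I) = 0$ and hence $g \in H(T)$. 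The principal obstacle I anticipate is the careful bookkeeping of Koszul signs in the super Hopf-algebraic manipulations of the final step, and verifying that the homogeneity of $W$ makes its stabilizer functor automatically representable as a closed subsupergroup of $G$ rather than a mere functor on $\C$-points.
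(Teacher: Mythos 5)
Your proof is correct and is essentially the argument behind the result the paper simply cites (\cite{ccf}, 11.7.11): the super version of Chevalley's construction via local finiteness of the right regular coaction, a finitely generated Hopf super ideal $I$ cutting out $H$, and $W=V\cap I$. The only details left implicit --- that $(V\cap I)\otimes \cO(T)=(V\otimes\cO(T))\cap(I\otimes\cO(T))$ by flatness over $\C$, and that in the forward direction the equality $\rho(g)(W\otimes\cO(T))=W\otimes\cO(T)$ (not merely an inclusion) follows by applying the same argument to $g^{-1}$ --- are routine.
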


\begin{proof} 
  See \cite{ccf}, 11.7.11.
\end{proof}

Once we
fix suitable coordinates, the subsuperspace $W \subset V$  corresponds
to a point $p \in |\Gr|$, where $\Gr$ is the Grassmannian of $r|s$ subsuperspaces of $\C^{m|n}$,
where $r|s=\mathrm{dim}\, W$ and $m|n=\mathrm{dim}\,V$ (see \cite{ccf}
Ch. 10 for the definition of $\Gr$ as superscheme). So we have
an action $a=\rho|_G:G \times \Gr \lra \Gr$, where $H=\Stab \, p$, 
and the corresponding orbit map
$a_p:G \lra \Gr$, $a_{p,T}(g)=g \cdot p$,
for all $g \in G(T)$. 
Notice that both $G$ and $\Gr$ are smooth algebraic varieties;
$a_p$ is of finite type.
By Chevalley's theorem $|a_p|(|G|)$ is open in its closure, hence
it defines a superscheme that we denote by $G \cdot p$ and call the
\textit{orbit} of $p$.
We have then the following commutative diagram:
\beq\label{diagr-G}
\begin{array}{ccc}
\rGL(m|n) & \stackrel{\rho_p}\lra & \Gr \\
\uparrow  &  & \uparrow \\
G & \stackrel{a_p}\lra &G \cdot p 
\end{array}
\eeq
the vertical arrows being injections.

Without loss of generality, choose $p \in \Gr$ as the
subsuperspace $\langle e_1, \dots, e_r$, $\ep_{n-s}, \dots \ep_n\rangle$.
So its stabilizer in $\rGL(m|n)$ is:
$$
P(R)=\left\{\begin{pmatrix} a_{11} & a_{12} & \al_{13} & \al_{14} \\
0 & a_{22} & \al_{23} & 0 \\
0 & \al_{32} & a_{33} & 0 \\
\al_{41} & 0 & 0 & a_{44} \end{pmatrix}\right\} \subset \rGL(m|n)(R)
$$
where $a_{11}$, $a_{44}$ are  $r \times r$, $s \times s$ matrices
with entries in $R_0$, while $\al_{14}$ is $r \times s$,
$\al_{41}$ is $s \times r$ matrix with entries in $R_1$
(similarly for the others).

\begin{observation}\label{nil-obs}
Let
$$
g=\begin{pmatrix} g_{11} & g_{12} & \gamma_{13} & \gamma_{14} \\ 
g_{21} & g_{22} & \ga_{23} & \ga_{34}\\
\ga_{31} & \ga_{32} & g_{33} & g_{34} \\
\ga_{41} & \ga_{42} & g_{34} & g_{44}
\end{pmatrix} \in \GL(m|n)(R)
$$
with $g_{11}$ and $g_{44}$ invertible.

In the
equivalence class $gP(R) \in G(R)/H(R)$, we can choose a unique representative of the form:
$$
\begin{pmatrix} I_r & 0 & 0 & 0 \\ 
u & I_{m-r} & 0 & \eta \\
\xi & 0 & I_{n-s} & v \\
0 & 0 & 0 & I_s\end{pmatrix}
$$
where $I_t$ denotes the identity matrix of rank $t$.

This is a straighforward calculation coming from the fact that the system:
$$
\begin{pmatrix} g_{11} & g_{12} & \gamma_{13} & \gamma_{14} \\ 
g_{21} & g_{22} & \ga_{23} & \ga_{34}\\
\ga_{31} & \ga_{32} & g_{33} & g_{34} \\
\ga_{41} & \ga_{42} & g_{34} & g_{44} \end{pmatrix}=
\begin{pmatrix} I_r & 0 & 0 & 0 \\ 
u & I_{m-r} & 0 & \eta \\
\xi & 0 & I_{n-s} & v \\
0 & 0 & 0 & I_s\end{pmatrix}
\begin{pmatrix} a_{11} & a_{12} & \al_{13} & \al_{14} \\
0 & a_{22} & \al_{23} & 0 \\
0 & \al_{32} & a_{33} & 0 \\
\al_{41} & 0 & 0 & a_{44} \end{pmatrix}
$$
has a unique solution. It is given by:
$$\begin{array}{c}
  a_{11}=g_{11}, \quad a_{12}=g_{12}, \quad \al_{13}=\ga_{13}, \quad \al_{14}=\ga_{14}, \quad \al_{41}=\ga_{41}, \quad a_{44}=g_{44} \\ \\
  a_{22}=g_{22}-ug_{12}, \quad  \al_{23}=\ga_{23}-u\ga_{13},\quad \al_{32}=\ga_{32}-\xi g_{12}, \quad a_{33}=g_{33}-\xi\ga_{13} \\ \\
  \eta=(\ga_{24}-u\ga_{14})g_{44}^{-1}, \quad \eta=(\ga_{31}-u\ga_{41})g_{11}^{-1}, \\ \\
  u=(g_{21}-\ga_{24}g_{44}^{-1}\ga_{41})(g_{11}-\ga_{14}g_{44}^{-1}\ga_{41})^{-1}, \\ \\
  v=(g_{34}-\ga_{31}g_{11}^{-1}\ga_{14})(g_{44}-\ga_{41}g_{11}^{-1}\ga_{14})^{-1}
\end{array}
$$
\end{observation}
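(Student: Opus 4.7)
The plan is to verify by direct computation that the matrix equation displayed in the observation has a unique solution in the block unknowns $u,\xi,\eta,v$ together with the entries $a_{ij},\al_{ij}$ of the right-hand factor. Multiplying out $M\cdot p$ block by block produces sixteen matrix equations, and the strategy is to solve them in a specific order so that each new unknown is determined by quantities already fixed.

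The first batch of equations comes from the top and bottom block-rows of the product: since the first and last block-rows of $M$ are trivial (they come from the identity), the corresponding block-entries of $Mp$ depend only on the right-hand factor. Equating them with the entries of $g$ immediately yields $a_{11}=g_{11}$, $a_{12}=g_{12}$, $\al_{13}=\ga_{13}$, $\al_{14}=\ga_{14}$, $\al_{41}=\ga_{41}$, and $a_{44}=g_{44}$. The remaining equations then split into two independent $2\times 2$ block systems, one from the second block-row in the unknowns $(u,\eta)$ and a symmetric one from the third block-row in $(\xi,v)$, together with four residual equations that determine the middle entries $a_{22},\al_{23},\al_{32},a_{33}$ once $u$ and $\xi$ are known.

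The main step is solving the block system for $(u,\eta)$: matching columns $1$ and $4$ of the second block-row gives the pair $u g_{11}+\eta\ga_{41}=g_{21}$ and $u\ga_{14}+\eta g_{44}=\ga_{24}$. Solving the second for $\eta$ (using invertibility of $g_{44}$) and substituting into the first yields the Schur-complement identity
$$
u\bigl(g_{11}-\ga_{14}g_{44}^{-1}\ga_{41}\bigr)=g_{21}-\ga_{24}g_{44}^{-1}\ga_{41},
$$
from which the formula stated for $u$ follows, and then $\eta$ is recovered from the second equation; the case $(\xi,v)$ is entirely symmetric. The only technical obstacle is justifying that the Schur complement $g_{11}-\ga_{14}g_{44}^{-1}\ga_{41}$ is invertible: its correction term is a sum of products of two odd elements, so it lies in the square of the ideal generated by the odd part of $R$, and modulo this nilpotent correction the matrix reduces to $g_{11}$, which is invertible by hypothesis; the full matrix then inverts via a geometric series in the nilpotent correction. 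With every parameter expressed by an explicit formula, both existence and uniqueness of the canonical representative of $gP(R)$ follow at once.
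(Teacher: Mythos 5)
Your proposal is correct and is essentially the paper's own argument: the paper merely asserts that the block system has the unique solution given by the displayed formulas, and your block-by-block elimination, the identifications coming from the first and fourth block-rows, the two decoupled Schur-complement systems for $(u,\eta)$ and $(\xi,v)$, and the nilpotence argument for inverting $g_{11}-\ga_{14}g_{44}^{-1}\ga_{41}$ and $g_{44}-\ga_{41}g_{11}^{-1}\ga_{14}$ (each correction term being built from finitely many odd elements, hence nilpotent) are exactly the details the paper leaves implicit. The one point worth flagging is that, with $P$ exactly as displayed, equating the fourth block-rows also forces the $(4,2)$ and $(4,3)$ blocks of $g$ to vanish --- a constraint you, like the paper, pass over silently, and which disappears once $P$ is written as the full stabilizer of $W$ with its nonzero $(4,2)$ and $(4,3)$ blocks (at the cost of small corrections to the formulas for $a_{22},\al_{23},\al_{32},a_{33}$).
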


\begin{lemma}
The superscheme $G\cdot p$ is smooth.
\end{lemma}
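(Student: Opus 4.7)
The plan is to reduce the question to showing smoothness at the base point $p \in |G \cdot p|$ using the transitive $G$-action on the orbit, and then to verify smoothness at $p$ using the explicit local chart on $\Gr$ provided by Observation~\ref{nil-obs}.

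For the reduction: the $\C$-points of $G$ act transitively on $|G \cdot p|$ by construction, so for any $q \in |G \cdot p|$ there exists $g \in G(\C)$ with $g \cdot p = q$. The left-translation morphism $L_g : G \cdot p \to G \cdot p$, $x \mapsto g \cdot x$, is an automorphism of superschemes (with inverse $L_{g^{-1}}$) sending $p$ to $q$. Since smoothness is a local property preserved by isomorphisms, it suffices to prove that $p$ is a smooth point of $G \cdot p$.

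For smoothness at $p$, Observation~\ref{nil-obs} identifies the big cell of $\Gr$ centered at $p$ with an affine superspace having free coordinates $u, v, \xi, \eta$, and writes these as explicit rational functions of the matrix entries of $g \in \GL(m|n)$ on the open subset where $g_{11}$ and $g_{44}$ are invertible. Restricting to $G$, this describes $a_p$ near $1_G$ in coordinates. The differential $(da_p)_{1_G} : \Lie(G) \to T_p(\Gr)$ has kernel $\Lie(H)$ (the tangent space to the stabilizer), so its image $T_p(G \cdot p)$ has super-dimension $\dim G - \dim H$. Since the fibers of $a_p : G \to G \cdot p$ are the left $H$-cosets, each of super-dimension $\dim H$, we also have $\dim_p(G \cdot p) = \dim G - \dim H$. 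Matching tangent-space super-dimension with local super-dimension gives smoothness at $p$.

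The main obstacle is establishing the local super-dimension count $\dim_p(G \cdot p) = \dim G - \dim H$ without circularly invoking smoothness of $G \cdot p$. The cleanest way is to use Observation~\ref{nil-obs} directly to exhibit an explicit local model: the coordinates $u, v, \xi, \eta$, as functions on an open neighborhood of $1_G$ in $G$, are constant along the fibers of $a_p$ and separate distinct $H$-cosets, realizing a neighborhood of $p$ in $G \cdot p$ as a subsuperscheme of an affine superspace whose defining equations can be read off from the explicit formulas in the observation. A direct dimension count on this model gives the super-dimension $\dim G - \dim H$ and simultaneously exhibits a smooth structure at $p$, after which the $G$-translation step completes the argument.
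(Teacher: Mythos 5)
Your reduction to smoothness at $p$ via translations is fine and is what the paper does implicitly when it says it suffices to check smoothness at $p$. The gap is in the second half: you deduce smoothness at $p$ from the equality of the tangent-space super-dimension with a local super-dimension $\dim G-\dim H$. In the super setting this criterion is not valid as stated. For a superscheme that is not yet known to be smooth, the ``odd local dimension'' at a point is not well defined independently of $\fm_p/\fm_p^2$, and matching numbers does not force the local structure to be of the polynomial/Jacobian form required by Definition~\ref{smooth-etale-def}: for instance $\C[x,\xi]/(x\xi)$ at the origin has cotangent space of dimension $1|1$ and reduced dimension $1$, yet is not smooth there, because the odd part of the structure sheaf fails to be locally free. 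Your final paragraph senses the circularity, but the proposed repair (``read off the defining equations'' of the orbit slice inside the big cell and do a direct dimension count) is precisely the step that needs an argument: writing down the equations of the slice does not by itself give the maximal-rank condition.

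What the paper does instead, and what is missing from your proposal, is to recognize the slice of the orbit through the big cell as a supergroup. Using Observation~\ref{nil-obs}, the big cell $\pi(U)\subset\Gr$ around $p$ is identified functorially with the unipotent closed subsupergroup $N\subset\GL(m|n)$ (matrices with identity diagonal blocks and free entries $x,\nu,\mu,y$), and under this identification $(G\cdot p)\cap\pi(U)$ corresponds to the closed subsupergroup $N_G=N\cap G$ of $G$. Smoothness at $p$ then follows from the general theorem that algebraic supergroups are smooth (\cite{fi1}), not from a tangent-space versus dimension comparison. If you wish to keep your explicit-coordinate route, you would in effect have to prove directly that $N_G$ is smooth, which amounts to reproving that theorem; invoking it through the group-theoretic identification is the key idea your proposal lacks.
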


\begin{proof} It is enough to prove smoothness at $p$.
Let $N$ be the closed subsupergroup  of $\rGL(m|n)$ defined
via functor of points by:
$$
N(R)=\left\{ \begin{pmatrix} I_r & 0 & 0 & 0 \\ 
x & I_{m-r} & 0 & \nu \\
\mu & 0 & I_{n-s} & y \\
0 & 0 & 0 & I_s
\end{pmatrix}\right\}
$$
Let $|U|$ be the open subset in $|\GL(m|n)|$ defined by
the open condition $g_{11}$ and $g_{44}$ invertible (see Obs \ref{nil-obs})
and $\pi(|U|)$ its projection on $|\Gr|$, $\pi:|\GL(m|n)| \lra |\Gr|=
|\GL(m|n)|/|P|$. Since $|U|$ and $\pi(|U|)$ are open respectively
in $|\GL(m|n)|$ and $|\Gr|$, they define superschemes, that we denote
with $U$ and $\pi(U)$.

Then, we have a functorial bijection:
$$
\rho_{p,R}: N(R) \lra \pi(U)(R) \subset \Gr(R)
$$
so $N$ and $\pi(U)$ are isomorphic supervarieties.

\medskip
Let $N_G$ be the closed subsupergroup of $G$ defined as
$N_G(R)=N(R) \cap G(R)$. We have $N_G(R)=\pi(U)(R) \cap (G \cdot p)(R)$,
hence $N_G$ is isomorphic to the open
subscheme $\pi(U) \cap G \cdot p$. 
\end{proof}

\begin{proposition} \label{smooth-orbit}
  The action $a:G \times G \cdot p \lra G \cdot p$ is transitive.
\end{proposition}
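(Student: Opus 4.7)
The plan is to verify transitivity at the base point $p$ itself, so we need to check that both $|a_p| \colon |G| \to |G\cdot p|$ and $(da_p)_{1_G} \colon \Lie(G) \to T_p(G \cdot p)$ are surjective.

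Surjectivity of $|a_p|$ is essentially by construction: the orbit $G \cdot p$ was defined precisely as the locally closed subsuperscheme of $\Gr$ whose underlying space is $|a_p|(|G|)$ (this is legitimate by Chevalley's theorem, as observed in the text). Hence $|a_p|$ surjects onto $|G \cdot p|$ tautologically.

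The main work is the surjectivity of $(da_p)_{1_G}$, and here I would invoke the preceding lemma directly. That lemma produces the closed subsupergroup $N_G \subset G$ and shows that the composition
\[
N_G \hookrightarrow G \stackrel{a_p}{\lra} G \cdot p
\]
lands in the open subsuperscheme $\pi(U) \cap G \cdot p$ and is in fact an \emph{isomorphism} onto it (via the functorial bijection $\rho_{p,R} \colon N(R) \to \pi(U)(R)$ restricted to $N_G$). In particular, this composite is smooth (indeed etale) at $1_G$, so its differential at $1_G$ is an isomorphism from $\Lie(N_G)$ onto $T_p(\pi(U) \cap G \cdot p) = T_p(G \cdot p)$. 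Since this differential factors through $(da_p)_{1_G}$ via the inclusion $\Lie(N_G) \hookrightarrow \Lie(G)$, we conclude that $(da_p)_{1_G}$ is surjective.

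I do not anticipate any serious obstacle beyond unpacking the identifications already supplied by the lemma: the non-trivial geometric content (namely that $G \cdot p$ is smooth at $p$ and locally isomorphic to $N_G$ through $a_p$) has already been done. Combining the two surjectivities gives transitivity of the $G$-action on $G \cdot p$ in the sense of the definition, and as noted in the remark following the definition of transitive action, this is equivalent to $a_p$ being smooth of relative dimension $\dim \ker(da_p)_{1_G}$, which will be useful in the sequel.
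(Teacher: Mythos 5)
Your proposal is correct and follows essentially the same route as the paper: surjectivity of $|a_p|$ is immediate from the definition of the orbit, and surjectivity of $(da_p)_{1_G}$ is extracted from the preceding smoothness lemma via the identification of $N_G$ with the open neighbourhood $\pi(U)\cap G\cdot p$ of $p$. In fact you spell out the factorization through $\Lie(N_G)\hookrightarrow\Lie(G)$ that the paper leaves implicit in the phrase ``by the previous lemma this is clear.''
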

\begin{proof}
  By the previous lemma, we know that $G \cdot p$ is smooth and
since $G$ is an algebraic supergroup, by \cite{fi1} it is smooth.
It is enough to show that $|a_p|$ is surjective (obvious) and
$(da_p)_{1_G}$ is surjective. By the previous lemma this is clear.
\end{proof}


Now we prove our main result.

\begin{theorem}\label{main}
Let the $G$ be an affine
algebraic supergroup and $H$ a closed subsupergroup. 
The etale sheafification of the functor
\beq\label{fopts-quot}
T \lra G(T)/H(T), \quad T \in \sschemes
\eeq
is representable in the category of superschemes, by a smooth superscheme.
\end{theorem}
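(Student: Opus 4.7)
The plan is to combine the results of this section with Theorem \ref{sheafif-thm} of Section \ref{etale-sec}. First, I would invoke Proposition \ref{fixedsubspace} to obtain a finite-dimensional representation $\rho: G \lra \rEnd(V)$ together with a subspace $W \subset V$ of some dimension $r|s$ whose stabilizer functor is exactly $H$. Viewing $W$ as a point $p$ of the Grassmannian superscheme $\Gr$ of $r|s$-planes in $V$, we obtain an action $a: G \times \Gr \lra \Gr$ with $H = \Stab(p)$, together with the associated orbit map $a_p: G \lra \Gr$.

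Next, by Chevalley's theorem applied as in diagram (\ref{diagr-G}), the set-theoretic image $|a_p|(|G|)$ is open in its closure in $|\Gr|$ and thus inherits the structure of a locally closed subsuperscheme $G \cdot p$ of $\Gr$. The lemma preceding Proposition \ref{smooth-orbit} shows that $G \cdot p$ is smooth, while Proposition \ref{smooth-orbit} gives that the restricted action $a: G \times (G \cdot p) \lra G \cdot p$ is transitive. Setting $Y := G \cdot p$, we are then in the exact setup of Theorem \ref{sheafif-thm}: a transitive action of $G$ on a smooth superscheme $Y$, with stabilizer $H$ at the distinguished point $p$.

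Applying Theorem \ref{sheafif-thm} yields a natural isomorphism between $Y = G \cdot p$ and the etale sheafification $\widetilde{G/H}$ of the functor $T \mapsto G(T)/H(T)$. Since $G \cdot p$ is a smooth superscheme, this exhibits the etale sheafification as representable by a smooth superscheme, which is exactly the assertion to be proved.

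The substantive difficulties in this line of argument have already been absorbed into the preceding material: the existence of etale-local sections for smooth morphisms (Propositions \ref{smooth-prop}--\ref{etale-sec-prop}), the descent argument that produces Theorem \ref{sheafif-thm} from Lemma \ref{key-lem}, and the explicit matrix normal form in Observation \ref{nil-obs} that underlies smoothness of the orbit. With all of these in hand, the proof of the main theorem reduces to an essentially formal assembly; the only point that requires a moment's attention is to verify that the functor $T \mapsto G(T)/H(T)$ coincides, before sheafification, with the one whose sheafification Theorem \ref{sheafif-thm} computes, and this is immediate from the identification $H = \Stab(p)$ coming from Proposition \ref{fixedsubspace}.
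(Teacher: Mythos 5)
Your argument coincides with the paper's own proof: both invoke Proposition \ref{fixedsubspace} to realize $H$ as the stabilizer of a point $p$ in a Grassmannian, use the smoothness of the orbit $G\cdot p$ and the transitivity from Proposition \ref{smooth-orbit}, and then conclude by applying Theorem \ref{sheafif-thm} with $Y=G\cdot p$. The assembly is correct and matches the paper's route step for step.
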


\begin{proof}
By Prop. \ref{smooth-orbit}, $G$ acts transitively on the smooth
superscheme $G\cdot p$. Hence, by Prop. \ref{smooth-prop}, $a_p$
is a smooth morphism of relative dimension $m|n$ (for suitable
$m,n$. By Prop. \ref{fixedsubspace}, we have that $H$ is the
stabilizer of a point, so that we can apply Thm. \ref{sheafif-thm} and obtain
the result.  
\end{proof}

We conclude with a comparison with the results in \cite{mz1} and
the definition in \cite{brundan}.

\begin{observation}
\begin{enumerate}
\item The functor of points of a superscheme is a sheaf
in the following Grothendieck topologies: Zariski,
etale and fppf (see \cite{fz, mz1}).
Thm \ref{main} asserts that the etale sheafification of the functor
(\ref{fopts-quot}) is representable by a superscheme $G/H$, hence also its
fppf sheafification has the same property. This is because
$G/H$ is already a sheaf in the fppf topology and the sheafification
construction is unique up to isomorphism (see \cite{fga}).
\item Our realization of quotients satisfies the properties
(Q1)-(Q3) in \cite{brundan} Sec. 2. Properties (Q1), (Q2) are clear
from our construction. As for property (Q3), notice that, 
in diagram (\ref{diagr-G}), $\rho_p$ is an
affine morphism, and the embedding of $G$ into $\rGL(m|n)$ is 
also an affine morphism. 
The Grassmannian $\Gr$ is covered by affine open subsets $V_i=\pi(U_i)$,
$V_i \cap G \cdot p$ is a closed subscheme of $V_i$ hence affine and
open in  $G \cdot p$. By the commutativity of (\ref{diagr-G}),
$a_p^{-1}(V_i \cap G\cdot p)$ is affine. 
\end{enumerate}
\end{observation}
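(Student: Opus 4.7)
For the first claim, the plan is to exploit the universal property of sheafification together with the representability already established in Theorem \ref{main}. By \cite{fz, mz1} the functor of points of any superscheme is a sheaf in the fppf topology; since Theorem \ref{main} identifies the etale sheafification $\widetilde{G/H}$ with (the functor of points of) a superscheme, $\widetilde{G/H}$ is automatically an fppf sheaf. Because the fppf topology refines the etale topology, every fppf sheaf is also an etale sheaf, so the canonical natural transformation from the presheaf $T \mapsto G(T)/H(T)$ to its fppf sheafification factors through $\widetilde{G/H}$; by uniqueness of sheafification (\cite{fga}) the two sheafifications are canonically isomorphic, and in particular the fppf sheafification is representable by the same smooth superscheme.

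For the second claim, the task is to verify Brundan's axioms (Q1)--(Q3) of \cite{brundan}, Sec. 2. Properties (Q1) and (Q2), which concern the existence of an $H$-invariant morphism $G \to G/H$ and its universal property among $H$-invariant morphisms to superschemes, follow essentially tautologically from our construction: $\widetilde{G/H}$ is defined as the sheafification of the presheaf quotient $T \mapsto G(T)/H(T)$, and the sheafification functor transfers the relevant universal property from that presheaf quotient. The nontrivial point is property (Q3), the affineness of the quotient morphism $a_p: G \to G \cdot p$.

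For (Q3), the plan is to produce an affine open cover $\{W_i\}$ of $G \cdot p$ for which each $a_p^{-1}(W_i)$ is affine, by restricting a standard affine cover of the Grassmannian. Take the big-cell affine cover $\{V_i = \pi(U_i)\}$ of $\Gr$, where the $U_i \subset \GL(m|n)$ are the standard opens cut out by invertibility of distinguished submatrix blocks as in Observation \ref{nil-obs}. Each $V_i$ is affine, and $W_i := V_i \cap G \cdot p$ is a closed subscheme of $V_i$, hence affine and open in $G \cdot p$. Commutativity of diagram (\ref{diagr-G}) yields $a_p^{-1}(W_i) = G \cap \rho_p^{-1}(V_i)$; since $\rho_p: \GL(m|n) \to \Gr$ is an affine morphism (its restriction over each $V_i$ is identifiable, via the matrix factorization of Observation \ref{nil-obs}, with the projection of an affine open of $\GL(m|n)$ onto the affine $V_i$) and $G \hookrightarrow \GL(m|n)$ is a closed immersion, this preimage is a closed subscheme of an affine superscheme, hence affine. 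The main technical point is verifying affineness of $\rho_p$; once this is in place, preservation of affineness under closed immersions and base change along $W_i \hookrightarrow V_i$ finishes the argument.
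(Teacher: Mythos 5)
Your proposal is correct and follows essentially the same route as the paper: part (1) is the same universal-property argument using that the representing superscheme is already an fppf sheaf, and for (Q3) you use the identical affine cover $V_i=\pi(U_i)$ of $\Gr$, the closedness of $V_i\cap G\cdot p$ in $V_i$, and the affineness of $\rho_p$ and of $G\hookrightarrow \rGL(m|n)$. The only difference is that you spell out why $\rho_p$ is affine via the matrix factorization of Observation \ref{nil-obs}, a point the paper simply asserts.
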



\begin{thebibliography}{99}

\bibitem{bo} A. Borel, \textit{Linear Algebraic Groups}, Springer 1991.

\bibitem{brundan} J. Brundan, 
\textit{Modular representations of the supergroup Q$\mathbf{(n)}$},
Pacific J. Math. 224 (2006), 65-90.

\bibitem{bcf}
L. Balduzzi, C. Carmeli, R. Fioresi,
\textit{Quotients in supergeometry},
Symmetry in mathematics and physics, Contemp. Math. 490, Amer. Math. Soc.,
177-187, 2009.

\bibitem{ccf} C.~Carmeli, L.~Caston, R.~Fioresi, 
{\it  Mathematical Foundation of Supersymmetry}, 
with an appendix with I. Dimitrov, EMS Ser. Lect. Math., European
Math. Soc., Zurich, 2011.

\bibitem{dm} P.~Deligne, J.~Morgan,  {\it Notes on supersymmetry 
(following J.~Bernstein)},  in: ``Quantum fields and strings. 
A course for mathematicians'', Vol.~1, AMS, 1999.
  

\bibitem{fi1} R. Fioresi {\it Smoothness of algebraic supervarieties and 
        supergroups}. Pacific Journal of Mathematics, {\bf 234}, no. 2, 
        295-310, 2008.


\bibitem{flv} R. Fioresi, M. A. Lledo, V. S. Varadarajan
\textit{The Minkowski and conformal superspaces},
J. Math. Phys., 48, 113505,2007.

\bibitem{fz}  Fioresi, R.; Zanchetta, F. 
\textit{Representability in supergeometry}. 
Expo. Math. 35 (2017), no. 3, 315-325.

\bibitem{leites}
D.~A.~Leites,  \textit{Introduction to the theory of supermanifolds},  
Russian Math. Surveys \textbf{35}:~1 (1980), 1-64.

\bibitem{ma}
Y.~I.~Manin,   \textit{Gauge field theory and complex geometry}; 
translated by N. Koblitz and J.R. King.  Springer-Verlag, 
Berlin-New York, 1988.

\bibitem{mz1} A. Masuoka, A. N. Zubkov
\textit{Quotient sheaves of algebraic supergroups are superschemes},
J. Alg. 348 (1), 135-170, 2009.

\bibitem{mz2} A. Masuoka, A. N. Zubkov,
{\it Solvability and nilpotency for algebraic supergroups},
J. Pure Appl. Alg. 221 (2), 339-365, 2017.

\bibitem{mo} D. Mumford, T. Oda, {\it Algebraic Geometry II},
Hindustan Book Agency, 2015.

\bibitem{vsv2} V.~S.~Varadarajan,  {\it Supersymmetry for
    mathematicians: an introduction},  Courant Lecture Notes  {\bf 1},
  AMS, 2004.

\bibitem{fga} A. Vistoli, \emph{Grothendieck topologies, 
fibered categories and descent theory}, pg. 1-104, 
in B. Fantechi et al. editors 
\emph{Fundamental algebraic geometry}, Math. surveys and monographs 123, 
AMS 2005.

\end{thebibliography}
\end{document}